\theoremstyle{definition}
\newtheorem{definition}{Definition}[section]
\newtheorem{example}{Example}
\newtheorem{remark}{Remark}
\theoremstyle{plain}
\newtheorem{lemma}[definition]{Lemma}
\newtheorem{theorem}[definition]{Theorem}
\newcommand*{\bR}{\ensuremath{\mathbb{R}}}
\newcommand*{\bN}{\ensuremath{\mathbb{N}}}
\newcommand*{\bB}{\ensuremath{\mathbb{B}}}
\newcommand{\N}{\mathbbm{N}}
\newcommand{\R}{\mathbbm{R}}
\newcommand{\B}{\mathcal{B}}
\newcommand{\Hau}{\mathcal{H}}
\newcommand{\abs}[1]{\left\vert #1 \right\vert}
\newcommand{\norm}[1]{\Arrowvert #1 \Arrowvert}
\DeclareMathOperator{\diam}{diam}
\DeclareMathOperator{\dist}{dist}
\def\Xint#1{\mathchoice
   {\XXint\displaystyle\textstyle{#1}}%
   {\XXint\textstyle\scriptstyle{#1}}%
   {\XXint\scriptstyle\scriptscriptstyle{#1}}%
   {\XXint\scriptscriptstyle\scriptscriptstyle{#1}}%
   \!\int}
\def\XXint#1#2#3{{\setbox0=\hbox{$#1{#2#3}{\int}$}
     \vcenter{\hbox{$#2#3$}}\kern-.5\wd0}}
\newcommand{\meanint}{\Xint-}
\title{Boundary blow up under Sobolev mappings}
\author{Aapo Kauranen and Pekka Koskela}
\begin{document}

\begin{abstract}
We prove that for mappings in $W^{1,n}(\bB^n, \R^m),$ continuous  up to the
boundary, with
modulus of continuity satisfying a certain divergence
condition, the
image of the boundary of the unit ball has zero $n$-Hausdorff measure.
For H\"older continuous mappings we also prove an essentially sharp generalized
Hausdorff dimension estimate.
\end{abstract}

\footnotetext{
{\it 2010 Mathematics Subject Classification: 46E35, 26B35, 26B10}\\
{\it Key words and phrases. Sobolev mapping, Hausdorff measure, modulus of
continuity}
\endgraf The authors were partially supported by the Academy of Finland grants
131477 and 263850.}

\maketitle

\section{Introduction}
Throughout the paper $\bB^n$ denotes the unit ball in $\R^n$ and
$W^{1,n}(\bB^n,\R^m)$ is the Sobolev space of $L^n(\bB^n,\R^m)$-functions
$f:\bB^n\rightarrow \R^m$ with  weak first order derivatives  in $L^n(\bB^n)$.

If $f:\bB^2\rightarrow \Omega\subset \R^2$ is a conformal mapping, then
the boundary of $\Omega$ can have positive Lebesgue measure even if $f$
extends continuously up to the boundary of the disk. If one requires  more,
for example uniform H\"older continuity, then $\partial\Omega$  is necessarily
of Lebesgue  measure zero. In fact, Jones and Makarov proved in \cite{JoMa95}
that
$\partial\Omega$ has
measure zero if  $f$ satisfies $\abs{f(z)-f(w)}\leq \psi(\abs{z-w})$ in $\bB^2$
for
$\psi:\left[0,\infty\right)\rightarrow\left[0,\infty\right)$ with
\begin{equation}
 \label{intehto}
\int_0 \abs{\frac{\log \psi(t)}{\log t}}^2\frac{\mathrm{d}t}{t}=\infty.
\end{equation}
This condition is very sharp: if the integral in \eqref{intehto} converges then
\cite{JoMa95} provides us with a simply connected domain $\Omega$ and a
conformal mapping $f:\bB^2\rightarrow\Omega$ so that the boundary of $\Omega$
has positive Lebesque measure and $f$ has the modulus of continuity $\psi$.

Our first result gives a surprisingly general extension of the conformal
setting; notice that each uniformly continuous conformal mapping
$f:\bB^2\rightarrow \Omega$ belongs to $W^{1,2}(\bB^2, \R^2).$
\begin{theorem}
\label{thm2}
 Let $f\in W^{1,n}(\bB^n, \R^m)$ be a continuous mapping so that
\begin{equation}
\label{jvuusmoduli}
\abs{f(z)-f(w)}\leq \psi(\abs{z-w})
\end{equation}
 for all $z,w\in \bar{\bB}^n,$ where $\psi:(0,\infty)\rightarrow(0,\infty)$ is
an allowable modulus of continuity with

\begin{equation}
\label{divehto2}
\int_0 \left|\frac{\log \psi(t)}{\log t}\right|^{n}\frac{dt}{t}=\infty.
\end{equation}

 Then  $\Hau^n(f(\partial \bB^n))=0.$
\end{theorem}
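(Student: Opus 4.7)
The goal is to produce, for every $\varepsilon>0$, a covering of $f(\partial\bB^n)$ by balls whose total $n$-volume is below $\varepsilon$. The argument will blend three ingredients: a dyadic cap decomposition of $\partial\bB^n$, a Fubini--Hölder radial oscillation estimate exploiting $W^{1,n}$-integrability of $\nabla f$, and an iterative refinement that turns the divergence condition \eqref{divehto2} into a vanishing telescoping product. First I would fix $k\ge k_0$ large, cover $\partial\bB^n$ by $N_k\lesssim 2^{k(n-1)}$ spherical caps $\{Q\}$ of radius $2^{-k}$ with bounded overlap, and set $\rho_k=1-2^{-k}$; for each cap $Q$, let $T_Q=\{tx:x\in 2Q,\ \rho_k<t<1\}$ denote its radial tent and $E_Q=\int_{T_Q}|\nabla f|^n$. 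Fubini then gives $\sum_Q E_Q\le C\,\mathcal{E}_k$ with $\mathcal{E}_k=\int_{\{\rho_k\le|x|<1\}}|\nabla f|^n\to 0$ as $k\to\infty$.

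The core radial bound comes from Hölder's inequality:
\[
|f(y)-f(\rho y)|\le\int_\rho^1|\nabla f(ty)|\,dt\le|\log\rho|^{(n-1)/n}\Big(\int_\rho^1|\nabla f(ty)|^n t^{n-1}\,dt\Big)^{1/n}.
\]
Taking $\rho=\rho_k$ (so $|\log\rho_k|\approx 2^{-k}$), integrating over $y\in 2Q$ and applying Chebyshev yields a point $y^*\in 2Q$ with $|f(y^*)-f(\rho_k y^*)|\lesssim E_Q^{1/n}$. Combining with the modulus of continuity \eqref{jvuusmoduli}, which forces $\diam f(Q)\le \psi(C\,2^{-k})$, one concludes that $f(Q)\subset B\bigl(f(\rho_k y^*),\ \psi(C\,2^{-k})+C E_Q^{1/n}\bigr)$.

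Since the one-scale content $\lesssim 2^{k(n-1)}\psi(2^{-k})^n$ does not vanish in general (e.g.\ when $\psi(t)\gg t^{(n-1)/n}$), the argument must be multi-scale. Starting from a rough cover at scale $k_0$, I would run a stopping-time refinement: if a current cover-ball carries too much Sobolev tent energy relative to its $\psi$-radius, split it further at scale $k+1$ using the step above; otherwise accept it into the final cover. This should produce a recursion
\[
(\text{content at scale }k+1)\le(1-c\,\phi_k^{\,n})\,(\text{content at scale }k)+C\mathcal{E}_k,
\]
with $\phi_k=|\log\psi(2^{-k})/\log 2^{-k}|$. Iterating, $\prod_{k\ge K}(1-c\phi_k^{\,n})\to 0$ because $\sum\phi_k^{\,n}=\infty$ is the discrete form of \eqref{divehto2}, and picking $K$ so large that $\sum_{k\ge K}\mathcal{E}_k<\varepsilon/C$ gives a cover with total $n$-content below $\varepsilon$.

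The main obstacle will be verifying the per-scale inequality with the precise gain $(1-c\phi_k^{\,n})$. This requires carefully matching the geometry of the balls at successive scales, balancing the $\psi$-radius $\psi(2^{-k})$ against the Sobolev correction $E_Q^{1/n}$ in the refinement, and making sure the extracted factor is exactly $\phi_k^{\,n}$ rather than something smaller. Once this is in place, the divergence condition plays a Borel--Cantelli-type role forcing the telescoped product to vanish, which is what makes the result work even for moduli $\psi$ much larger than $t^{(n-1)/n}$.
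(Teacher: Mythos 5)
Your single-scale estimates (the radial H\"older/Fubini oscillation bound on tents and the cap bound $\diam f(Q)\le\psi(C2^{-k})$) are sound, but the heart of the proof is missing: the claimed per-scale recursion with multiplicative gain $(1-c\,\phi_k^{\,n})$ is only conjectured, and no mechanism is offered that would produce it. Refining a cover ball of radius $\psi(C2^{-k})+CE_Q^{1/n}$ into $\approx 2^{n-1}$ children at scale $k+1$ does not by itself decrease the $n$-content (for $\psi(t)=t^\gamma$ with small $\gamma$ it increases it), so the entire gain would have to be extracted from an interplay between the Sobolev tent energy and the modulus of continuity, and that is precisely the step you have not supplied. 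There is also a concrete bookkeeping error in the final step: the annular energies $\mathcal{E}_k=\int_{\{1-2^{-k}\le|x|<1\}}|\nabla f|^n$ are tails of one another, so a point at distance $\delta$ from $\partial\bB^n$ is counted in roughly $\log_2\frac{1}{\delta}-K$ of the terms $\mathcal{E}_k$, $k\ge K$; hence $\sum_{k\ge K}\mathcal{E}_k$ is finite only if $\int_{\bB^n}|Df|^n\log\frac{1}{1-|x|}\,dx<\infty$, which is not implied by $f\in W^{1,n}$. Even with the damping $\prod_{j>k}(1-c\phi_j^{\,n})$ built into the unrolled recursion, the accumulated error need not vanish when $\phi_k\to 0$ slowly, which is exactly the regime where \eqref{divehto2} is delicate.

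For comparison, the paper avoids both problems by not iterating a cover at all. It takes a Whitney decomposition of $\bB^n$, assigns to each cube $Q$ the ball $B(f_Q,r_Q)$ with $r_Q$ the maximal oscillation against neighbours, and controls the total weighted $n$-content once and for all by $\int_{\bB^n}|Df|^n$ via the Poincar\'e inequality and the bounded overlap of the $N(Q)$ (no scale-by-scale recounting of energy). The divergence condition enters through a counting (porosity) argument along each radius $[0,\omega]$: comparing the number of Whitney cubes met with the number of dyadic annuli around $f(\omega)$ shows, using \eqref{jvuusmoduli} and the reformulation of \eqref{divehto2} as $\int_0(u/u')^{n-1}\,dt/t^n=\infty$ with $u=\psi^{-1}$, that at least half of the annuli are ``good''; in each good annulus one finds balls from a fixed weighted family whose suitable dilations cover $f(\omega)$, yielding a weighted cover of multiplicity $G_l\to\infty$. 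Dividing the finite weighted content by $G_l$ and invoking the weighted-to-standard Hausdorff content comparison (Lemma \ref{Howroyd}) gives $\Hau^n_\infty=0$. This multiplicity blow-up is the rigorous replacement for your telescoping product $\prod(1-c\phi_k^{\,n})\to 0$; without an argument producing either that multiplicity or your per-scale gain, the proposal does not yet prove the theorem.
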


Above, $\Hau^n(A)$ denotes the $n$-dimensional Hausdorff measure of a set $A.$

For the definition of an allowable modulus of continuity see Section
\ref{preliminaries} below. For example, $\psi(t)=Ct^{\gamma},$ $0<\gamma<1,$ and

\begin{equation*}
 \psi_{l,s}(t)=\exp\left(
-C \frac{\left(
\log
\frac{C_l}{t}\right)^{\frac{n-1}{n}}}{\left(\log^{(l)}
\frac{C_l}{t}\right)^{\frac{s}{n}}\left(\prod\limits_{k=2}^{l-1}\log^{(k)}
\frac{C_l}{t}\right)^{\frac{1}{n}}}
\right)
\end{equation*}
 are allowable for all integers $l\geq 2$ and all $s>0$. Notice that
$\psi_{l,s}$ satisfies \eqref{divehto2}
if and only if $s\leq1.$ Here
$C>0,$   $\log^{(k)} t$ is the $k$-times iterated logarithm and  $C_l$ is 
any constant with $\log^{(l)}\frac{C_l}{2}\geq1$. 

Let us look at the special case $n=m=2$ of Theorem \ref{thm2} in the
H\"older continuous setting: $\psi(t)=Ct^{\gamma},$ where $0<\gamma\leq1.$
Consider a space filling (Peano) curve, i.e. a continuous mapping $g$ from
the unit circle onto a square. In the standard construction, $g$ is H\"older
continuous with exponent $\gamma=1/2.$ If one takes, say, the Poisson 
extension $f$ of $g$ to the unit disk, then $f$ is also H\"older continuous.
It is easy to check by hand that the partial derivatives of $f$ do not belong to
$L^2(\bB^2).$ By  Theorem \ref{thm2} no H\"older continuous (or even continuous
with control
function satisfying \eqref{divehto2}) extension $f$ 
of a space filling curve can satisfy $|Df|\in L^2(\bB^2).$

In the H\"older continuous case,  Jones
and Makarov actually proved that the Hausdorff dimension of $f(\partial\bB^2)$
is strictly less than two for conformal $f$. Contrary to the area zero
results, this dimension estimate is truely conformal in the following sense.

\begin{example}
 Let $p>1.$ There exists a locally H\"older continuous homeomorphism
$f:\R^2\rightarrow\R^2$ with $f\in W^{1,2}_{loc}(\bB^2,\R^2)$, which maps
$\partial\bB^2$ onto a set of positive $\Hau^g$-measure, for the gauge function
$g(t)=t^2(\log \frac{1}{t})^{p}.$
\end{example}

Here $\Hau^g$ denotes the generalized Hausdorff measure
with the function $g(t)$ as the dimension gauge. The precise definitions are
given
in Section \ref{preliminaries}.
Our second result gives a rather optimal positive result.
\begin{theorem}
\label{thm1}
 Let $f\in W^{1,n}(\bB^n, \R^m)$ and fix $0<\gamma\leq1$ and $C_0>0.$ 
If f satisfies
$$
\abs{f(z)-f(w)}\leq C_0 \abs{z-w}^{\gamma}
$$
for all $z,w \in \bB^n,$ then
$\Hau^g(f(\partial \bB^n))=0,$ for the gauge function $g(t)=t^n\log\frac{1}{t}.$
\end{theorem}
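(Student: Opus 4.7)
The plan is to combine the Hölder-type diameter bound on $f$-images of small boundary balls with the area inequality $\Hau^n(f(E))\le C\int_E|Df|^n$ for $W^{1,n}$-maps, and then extract the extra $\log(1/t)$ factor in the gauge $g$ through an adaptively-chosen multi-scale covering of $\partial\bB^n$.

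First, for each small $r>0$, I would cover $\partial\bB^n$ by a bounded-overlap family of balls $B_i=B(x_i,r)$, $x_i\in\partial\bB^n$, of cardinality $\asymp r^{-(n-1)}$. Setting $E_i=B(x_i,2r)\cap\bB^n$, continuity of $f$ up to $\bar\bB^n$ gives $f(\bar B_i\cap\bar\bB^n)\subset\overline{f(E_i)}$, and two bounds are then available: the Hölder diameter bound $\operatorname{diam}(\overline{f(E_i)})\le C_0(4r)^\gamma$, and (after extending $f$ across $\partial\bB^n$ by reflection so that the area inequality applies on a slightly larger ball) $\Hau^n(\overline{f(E_i)})\le Ca_i$, where $a_i:=\int_{B(x_i,3r)\cap\bB^n}|Df|^n$. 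By bounded overlap, $\sum_i a_i\le C\phi(r)$ with $\phi(r):=\int_{\{|x|>1-3r\}}|Df|^n$ tending to $0$ as $r\to 0$ by absolute continuity of the integral. Covering each $\overline{f(E_i)}$ at scale $r^\gamma$ via the area bound produces a cover whose $g$-sum per ball is at most $Ca_i\log(1/r^\gamma)$, and summing gives
\[\Hau^g_{r^\gamma}(f(\partial\bB^n))\le C\gamma\log(1/r)\phi(r).\]

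The main obstacle is that $\log(1/r)\phi(r)$ need not tend to zero as $r\to 0$: only $\phi(r)\to 0$ is guaranteed. To remedy this, I would replace the uniform-scale cover with an adaptive one: for each $x\in\partial\bB^n$ pick the largest scale $r(x)\le 1/2$ for which $r(x)^{n\gamma}$ is comparable to $\int_{B(x,2r(x))\cap\bB^n}|Df|^n$; such a scale exists since the right-hand side tends to $0$ with $r$ while $r^{n\gamma}$ does so more slowly for $\gamma<1$. A Vitali-type selection among $\{B(x,r(x))\}$ yields a bounded-overlap cover on which the Hölder and Sobolev bounds have matching order, and the associated $g$-sum rewrites as $C\sum_i\int_{2B_i}|Df|^n\log(1/r(x_i))$. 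Grouping dyadically in $r(x_i)$ and using that the balls with $r(x_i)\le 2^{-k}$ lie inside the shell $\{|x|>1-2^{-k+1}\}$, this sum is dominated by a weighted tail integral of $|Df|^n$ which can be made arbitrarily small by absolute continuity as the cover is refined. Verifying the balancing condition defining $r(x)$ and controlling the logarithmic weight after the Vitali selection is the technical heart of the argument.
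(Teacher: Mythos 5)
Your argument has two genuine gaps, and they sit exactly where the difficulty of the theorem lies. First, the pair of claims ``$\Hau^n(\overline{f(E_i)})\le C a_i$'' and ``covering each $\overline{f(E_i)}$ at scale $r^\gamma$ via the area bound produces a cover whose $g$-sum is at most $Ca_i\log(1/r^{\gamma})$'' cannot be used as stated. The area inequality $\Hau^n(f(E))\le C\int_E\abs{Df}^n$ is not an available black box for continuous maps in the borderline class $W^{1,n}$: it encodes a Lusin (N)-type property which fails for general continuous $W^{1,n}$ maps, and for H\"older continuous ones (applied, via your reflection, to open sets meeting $\partial\bB^n$) it is essentially the Mal\'y--Martio theorem, i.e.\ already the known $\Hau^n$-null version of the statement you are proving. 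More decisively, even granting the measure bound, the covering step is invalid: a bound on $\Hau^n(S)$ only provides covers at arbitrarily small, uncontrolled scales, for which the factor $\log(1/\diam U_j)$ is unbounded; to cover at the prescribed scale $r^{\gamma}$ with $n$-sum $\lesssim a_i$ you would need a box-counting bound at that scale, which follows neither from the H\"older condition nor from the local energy. Since $g(t)=t^n\log\frac1t$ is a strictly larger gauge than $t^n$ near $0$, $\Hau^n$-information alone is simply too weak to control $\Hau^g$.

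The adaptive-scale fix does not repair this. The balancing radius $r(x)$ with $r(x)^{n\gamma}\asymp\int_{B(x,2r(x))\cap\bB^n}\abs{Df}^n$ need not exist (the local energy can lie below $r^{n\gamma}$ at every scale, and for $\gamma=1$ your comparison argument breaks down), and, more seriously, what your scheme ultimately produces is a log-weighted tail sum of the form $\sum_i a_i\log\frac{1}{r_i}$, i.e.\ after dyadic grouping $\sum_k k\int_{\{1-2^{-k+1}<\abs{x}<1\}}\abs{Df}^n$. For $f\in W^{1,n}$ one only knows that the unweighted sum of shell energies is finite and that each tends to zero; the log-weighted sum may diverge, so it cannot be ``made arbitrarily small by absolute continuity.'' The paper defeats the logarithm by a different mechanism: weighted families of image balls built over a Whitney decomposition of $\bB^n$, a counting (porosity) argument along the Whitney chain to a boundary point showing that at least half of the dyadic annuli around $f(\omega)$ are good, and a dilation of the family so that every point of the boundary image is covered with multiplicity comparable to $l$ by balls of radius at least $2^{-l}$; then $\log(1/\diam)\le l\log 2$ is cancelled by the multiplicity, leaving only the energy of a thin boundary shell, which is at most $\varepsilon$. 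Some substitute for that multiplicity mechanism is the missing heart of your argument; the $\varepsilon$-shell idea you do use (smallness of $\phi(r)$) matches the paper's, but by itself it cannot absorb the logarithmic factor.
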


Jones and Makarov proved their result via harmonic measure and hence this
technique does not work in the setting of Theorem \ref{thm2}. An alternate
approach, relying on the conformal invariance of (quasi)hyperbolic metric, was
given in Koskela-Rohde \cite{KoRo97}, see \cite{Ni06}. Furthermore, Mal\'{y} and
Martio \cite{MM95} established Theorem \ref{thm2} in the H\"older continuous
case via a technique that we have not been able to push further.

Let us briefly describe the idea of the proof of Theorem \ref{thm2}. We
consider a Whitney decomposition of $\bB^n$ and assign each $Q\in \mathcal{W}$
a vector $f_Q\in \R^m$ and a radius $r_Q.$ The vector $f_Q$ will simply be the
``average'' of $f$ over $Q$ and $r_Q$ the maximum of $\abs{f_Q-f_{\hat{Q}}}$
over all neighbors of $Q.$ Then the $n$-integrability of the weak derivatives of
$f$ guarantees, via the Poincar\'e inequality, that the sequence
$\{r_Q\}_{Q\in \mathcal{W}}$ belongs to $l^n.$ We realize $f(\partial\bB^n)$ as
(a part of) the closure of $\{f_Q\}_{Q\in \mathcal{W}}$ in $\R^m.$ Those
$f(\omega),$ $\omega\in \partial\bB^n,$ for which one can find a sequence of
$Q\in\mathcal{W}$ with $\abs{f_Q-f(\omega)}\lesssim r_Q$ are easily handled.
For the remaining $\omega \in \partial\bB^n,$ we modify our centers $f_Q$ and
radii $r_Q,$ still retaining the $l^n$-condition, so that suitably blown up
balls cover these points sufficiently many times. This is where the
non-integrability condition \eqref{divehto2} kicks in. One cannot
fully follow the above idea, and our proof below is more complicated.

Our approach is flexible and applies to many related problems. In order to
avoid extra technicalities, we do not record such applications here. Let us
simply mention that the dimension gap phenomenon from \cite{HeKoNi11} can be
shown to extend from conformal mappings to general Sobolev mappings 
\cite{KoZa12}.

\section{Preliminaries}
\label{preliminaries}
Let us first agree on some basic notation. Given a number $a>0$, we write
$\lfloor a\rfloor$ for the largest integer less or equal to $a$. Similarly,
$\lceil a\rceil$ is the smallest integer greater or equal to $a$. If $A$ is a
finite set set, $\sharp A$ is the number of elements in $A$. 
If $A\subset\bR^n$ has finite and strictly positive Lebesgue measure and
$f\colon\bR^n\to\bR$ is a Lebesgue integrable
function, we denote the average $\frac{1}{\abs{A}}\int_A f$ of $f$ over the set
$A$ by $\meanint_A f$ or $f_A$, where $|A|$ is the $n$\nobreakdash-dimensional
Lebesgue measure of the set $A$. For $f:\R^n \rightarrow \R^m$ $f_A$ is then
defined via the component functions of $f.$  Given a point $x\in\bR^n$ and a
non-negative number $r$, $B(x,r)$ denotes the open ball with centre $x$ and
radius $r$ and $Q(x,r)$ denotes the cube
$\{y\in\bR^n\colon\max\{|x_i-y_i|\}_{i=1,2,\ldots,n}\leq r\}$. If $B=B(x,r)$ is
a ball and $a$ is a positive number, the notation $aB$ stands for the ball
$B(x,ar)$. We denote  the radius of a ball $B$ by $r(B)$. If we write
$L=L(\cdot)$, we mean that the number $L>0$ depends on the parameters listed in
the parentheses. Finally, $C$ denotes a positive constant, which may depend
only on 
$n$ and $m$, the dimensions of the domain space and the image space,  and may
differ from occurrence to
occurrence.

We write $\mathcal{H}^h(A)$ for the generalized Hausdorff
measure of a set $A\subset\bR^n$, given by
$$
\mathcal{H}^h(A)=\lim_{\delta\to0}\mathcal{H}^h_\delta(A),
$$
where
$$
\mathcal{H}^h_\delta(A)=\inf\Bigl\{\sum \limits_{i=1}^{\infty}
h(\diam U_i)\colon A\subset\bigcup\limits_{i=1}^{\infty} U_i,
\diam U_i\leq\delta\Bigr\}
$$
and $h$ is a dimension gauge (a non-decreasing function with
$\lim_{t\to0+}h(t)=h(0)=0$ and with $h(t)>0$ for all $t>0$). If
$h(t)=t^a$ for some $a\geq0$, we simply write
$\mathcal{H}^a$ for $\mathcal{H}^{h}$ and call it the
$a$-dimensional Hausdorff measure.

We need also a generalized weighted
Hausdorff content of a set $A\subset \R^n,$ given by
$$
\lambda_{\infty}^h(A)= \inf\left\lbrace \sum_{i=1}^{\infty} c_i h(\diam U_i)
\colon \chi_A(x)\leq \sum_{i=1} ^{\infty} c_i \chi_{U_i}(x),\, \forall x\in \R^n
\right\rbrace.
$$
Also here $h$ is  a gauge function. A sequence of pairs $(c_i,
U_i)_{i=1}^{\infty}$, where  $c_i\geq0$ and $U_i\subset \R^n,$ that satisfies
$\chi_A(x)\leq \sum_{i=1}^{\infty} c_i \chi_{U_i}(x),$ is called a
weighted cover of the set $A$. Again, we write $\lambda^h_ {\infty}=\lambda^a_
{\infty},$ if $h(t)=t^a.$

\begin{lemma}
\label{Howroyd}
 Let $E\subset \R^n$ be bounded. Let $h$ be a continuous gauge function
with
$h(2t)\leq c h(t)$ for some $c >0.$ Then
$\Hau_{\infty}^{h}(E)\leq c \lambda_{\infty}^h(E)$.
\end{lemma}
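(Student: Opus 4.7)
Plan. The inequality is the nontrivial direction of the equivalence between weighted and ordinary Hausdorff contents for doubling gauges, and the natural approach is a Frostman-type duality.

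We may assume $\Hau^h_\infty(E)>0$. Observe that $\Hau^h_\infty(\overline E)=\Hau^h_\infty(E)$ and $\lambda^h_\infty(\overline E)=\lambda^h_\infty(E)$ — given any (weighted) cover of $E$ by sets $U_i$, the closures $\overline{U_i}$ form a (weighted) cover of $\overline E$ of identical diameters and weights — so we may further assume that $E$ is compact. Now apply Frostman's lemma to the gauge $h$, admissible since $h$ is continuous and satisfies $h(2t)\leq c\,h(t)$: this yields a nontrivial finite Borel measure $\mu$ supported on $E$ with $\mu(B(x,r))\leq h(r)$ for all $x\in\R^n$, $r>0$, and with total mass $\mu(E)\geq c^{-1}\Hau^h_\infty(E)$. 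The doubling constant of $h$ enters precisely here, in translating the dyadic-cube construction of $\mu$ into bounds for arbitrary balls.

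The rest is a one-line computation. For any set $U\subset\R^n$, fixing $x_0\in U$ gives $U\subset B(x_0,\diam U)$, so $\mu(U)\leq h(\diam U)$. For any weighted cover $(c_i,U_i)_{i=1}^{\infty}$ of $E$, integrating the pointwise inequality $\chi_E\leq\sum_i c_i\chi_{U_i}$ against $\mu$ yields
\begin{equation*}
\mu(E)\leq\sum_{i=1}^{\infty} c_i\,\mu(U_i)\leq\sum_{i=1}^{\infty} c_i\,h(\diam U_i),
\end{equation*}
and taking the infimum over all weighted covers gives $\mu(E)\leq\lambda^h_\infty(E)$. Combined with the Frostman mass bound, this is exactly $\Hau^h_\infty(E)\leq c\,\lambda^h_\infty(E)$.

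The main obstacle is the careful bookkeeping of doubling losses so that the overall constant in the conclusion is $c$ rather than $c^2$; this requires invoking the doubling property of $h$ just once, in the step relating arbitrary balls to dyadic cubes of comparable scale within the Frostman construction. A minor point — the passage from $E$ to $\overline E$ — is handled cleanly by the closure observation noted above, since it requires no use of continuity of $h$.
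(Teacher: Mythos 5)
Your Frostman-duality idea is a legitimate route to this comparison for \emph{compact} sets, and it is in fact close in spirit to the duality behind the result the paper cites (the paper gives no self-contained argument, but refers to Corollary 8.2 and the proof of Theorem 9.7 in Howroyd's thesis, and to Federer 2.10.24, where the inequality is proved for arbitrary sets by working directly with covers). The genuine gap in your proposal is the reduction to the compact case, which is exactly where the difficulty of the stated lemma lies. The claim that the closures $\overline{U_i}$ of a (weighted) cover of $E$ form a (weighted) cover of $\overline{E}$ is false for infinite covers: a point of $\overline{E}\setminus E$ need not belong to any $\overline{U_i}$ at all, let alone be covered with total weight at least $1$. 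Consequently the inequality you actually need, $\lambda^h_\infty(\overline{E})\leq\lambda^h_\infty(E)$, fails in general. Concretely, take $E=\mathbb{Q}^n\cap[0,1]^n$ and $h(t)=t^n$: then $\lambda^h_\infty(E)\leq\Hau^h_\infty(E)=0$ (cover the $i$th point by a ball of diameter $\varepsilon^{1/n}2^{-i/n}$), while $\lambda^h_\infty([0,1]^n)>0$, as one sees by integrating any weighted cover against Lebesgue measure. With this $E$ your chain of inequalities would ``prove'' $1=\Hau^n_\infty([0,1]^n)\leq 2^n\lambda^h_\infty(E)=0$. The same example shows that your other claim, $\Hau^h_\infty(\overline{E})=\Hau^h_\infty(E)$, is also false (only the trivial direction, by monotonicity, holds). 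Nor can the gap be closed by applying Frostman's lemma to $E$ itself: the lemma is stated, and used in the paper (for the sets $F_{l_0}=f(E_{l_0})$), for arbitrary bounded sets, which need not be compact or even Borel/analytic, whereas the Frostman construction needs compactness (or analyticity via capacitability); for instance a Bernstein set has positive $\Hau^n_\infty$-content but all its compact subsets are countable, so no reduction to compact subsets of $E$ can work either. This is precisely why the cited proofs argue at the level of covers rather than through a measure carried by $E$.

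A secondary issue: even in the compact case, the constant you claim, namely $\mu(E)\geq c^{-1}\Hau^h_\infty(E)$ with the growth bound $\mu(B(x,r))\leq h(r)$ and ``doubling used only once,'' is asserted rather than proved. The dyadic construction controls $\mu$ on dyadic cubes in terms of side length, and converting this to a bound on arbitrary balls in terms of $h(r)$, and dyadic content to $\Hau^h_\infty$ defined via diameters, costs dimensional factors and several applications of $h(2t)\leq c\,h(t)$. So your argument would at best give $\Hau^h_\infty(E)\leq C(n,c)\,\lambda^h_\infty(E)$ rather than the stated constant $c$. That weaker constant would still suffice for every use of the lemma in the paper, so this point is minor compared with the failure of the closure reduction, but as written the proof of the statement as given is not complete even for compact $E$.
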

\begin{proof}
 The lemma follows from Corollary 8.2 and the proof of  Theorem 9.7 of
\cite{HowroydPhD} (see also \cite[2.10.24]{FedererGMT}).
\end{proof}

 Recall that for each open subset $U$ of $\bR^n$ there exist a Whitney
decomposition $U=\bigcup\limits_{i=1}^\infty Q_i$, where $Q_i$ are cubes
with mutually parallel sides, pairwise disjoint interiors and each of edge
length
$2^k$ for some integer $k$, such that the relation
\begin{equation}
\label{kuutiokoko}
\frac{1}{4}\leq\frac{\diam Q_i}{\dist(Q_i,\partial\Omega)}\leq1
\end{equation}
holds for all $i=1,2,\ldots$. We write $Q_1\backsim Q_2$, if the Whitney cubes
$Q_1\neq Q_2$ share at least one point (the so-called neighbor cubes). We have
$$
\frac{1}{4}\leq\frac{\diam Q}{\diam\tilde{Q}}\leq4,
$$
once $Q\backsim\tilde{Q}$. Therefore, the total number $\sharp\{\tilde{Q}\colon
\tilde{Q}\backsim Q\}$ of all neighbours of a fixed cube $Q$ does not exceed
$C$.
See~\cite{SteinSingInt} for details.

Let $\omega\in\partial\bB^n$. By $(Q_j(\omega))_{j=1}^\infty$, we mean the
sequence of all Whitney cubes in a fixed Whitney decomposition of $\bB^n$,
intersecting the radius $[0,\omega]$. This sequence starts with a central cube
and tends to $\omega$. 
For a point $x\in[0,\omega],$ we denote the number of Whitney cubes
intersecting the segment $[0,x]$ by $\sharp q(0,x).$ It is easy to see that
\begin{equation}\label{comparable}
c_1\leq\frac{\sharp q(0,x)}{\log\frac{1}{1-|x|}}\leq c_2,
\end{equation}
whenever $\sharp q(0,x)>c_3,$ where $c_i>0,$ $i=1,2,3$ are constants that may
depend on $n$.

Finally we define the allowable moduli of continuity.
\begin{definition}
 A continuously differentiable increasing bijection
$\psi:(0,\infty)\rightarrow(0,\infty)$ is an \emph{allowable modulus of
continuity} if there exists  $t_0<1$ and $\beta >0$ such that for every
$t\leq t_0$ the following conditions hold:
\begin{equation}
\label{logom1}
\log\frac{1}{\psi^{-1}(t)}\text{ is differentiable and
}\frac{(\psi^{-1})^{\prime}(t)}{\psi^{-1}(t)}t\text{ is a decreasing
function;}
\end{equation}
\begin{equation}
\label{logom2}
 \log\frac{1}{\psi^{-1}(t)}\leq\beta \log\frac{1}{\psi^{-1}(\sqrt{t})};
\end{equation}
\begin{equation}
 \label{monotpsi}
\frac{(\log \psi(t))^{\prime} t \log t}{\log \psi(t)} \text{ is a monotone
function.}
\end{equation}
\end{definition}

\begin{remark}
 \begin{enumerate}
  \item[i)]
  One could replace the monotonicity conditions in \eqref{logom1} and
\eqref{monotpsi} with a \emph{pseudomonotonicity} condition (e.g. there exists
a constant $C>0$ such that $u(t)\leq Cu(s)$ if $t\leq s$). This would only
affect the constants in the proofs.
  \item[ii)] 
The conditions \eqref{logom1} and \eqref{logom2} mean that the function
$\log\frac{1}{\psi^{-1}(t)}$ is a function of logarithmic type in the sense of 
\cite[Definition 4.2.]{Ni06}.
 \end{enumerate}
\end{remark}

\section{Proofs}
\begin{proof}
We may assume that $m,n\geq 2.$ Let $f\in W^{1,n}(\bB^n,\R^m)$ and  $\psi$ be
as in the statement of Theorem \ref{thm2}. 
Denote $\psi^{-1}(t)$  by $u(t).$ It follows from our assumptions
\eqref{divehto2}, \eqref{logom1}, \eqref{logom2},
\eqref{monotpsi} and \cite[Remark
5.3.]{Ni06} that
\begin{equation}
\label{divehto}
\int_0 \left(\frac{u(t)}{u'(t)}\right)^{n-1}\frac{dt}{t^n}=\infty.
\end{equation}

We define
$\alpha(t)=\frac{u(t)}{u'(t)}$ and
$\lambda(k)=\frac{2^{-k}}{\alpha(2^{-k})}$ for $k\in\N$. By \eqref{logom1},
$\lambda$ is
increasing for large $k$. For simplicity we assume $\lambda$ to be increasing.

Let $\mathcal W$ be a fixed Whitney decomposition of $\bB^n$. For each cube
$Q\in\mathcal W$, we define a corresponding centre $f_Q$ and a corresponding
radius \mbox{$r_Q=\max\{|f_Q-f_{\tilde{Q}}|\colon Q\backsim\tilde{Q}\}$}, which
determine a family of balls on the image side: $\mathcal
B=\{B(f_{Q},r_{Q})\colon Q\in\mathcal W, r_{Q}>0\}$. 
Note that some balls in $\mathcal B$ may coincide, the simplest way to act in
such a situation is to treat them as different balls  for certainty (we may
identify each ball in $\mathcal B$, with
$(Q,B(f_{Q},r_{Q}))$, then different Whitney cubes on
the pre-image side generate different pairs), however, identifying such balls
would cause no problem either.

We assign two new weighted collections of balls to each ball in $\mathcal B$.
Given  $B=B(x,r) \in \mathcal B$, we define concentric
subballs $S_i(B)=B(x,r/2^i)$
for all $i\in \N$ and assign the weight $w_{S_i(B)}=2^i$ to each $S_i(B).$
We set  $\mathcal{S}_B=\{S_i(B)\colon i\in \N\}$. Then
$$
\sum_{B'\in \mathcal{S}_B} w_{B'}r(B')^n=\sum\limits_{i=1}^\infty
w_{S_i(B)}r(S_i(B))^n = \sum\limits_{i=1}^{\infty} 2^i
\frac{r(B)^n}{2^{ni}}\leq r(B)^n.
$$

The second collection is defined in a similar way. If $B=B(x,r)$ is a
ball in $\mathcal B$, we choose the smallest number $k_0(B)\in\bN$, such that
$2^{-k_0(B)}\leq r$. Next, for each $k=k_0(B),k_0(B)+1,\ldots$, we choose
$R_k(B)=B(x,\alpha(2^{-k}))$ and set $\mathcal R_B=\{R_k(B)\colon
k=k_0(B),k_0(B)+1,\ldots\}$. The weights we assign this time are
$w_{R_k(B)}=\lambda(k)$
for all $k=k_0(B),k_0(B)+1,\ldots$.
Similarly to above:
\begin{align*}
\sum_{B'\in \mathcal{R}_B} w_{B'}r(B')^n&=\sum\limits_{k=k_0(B)}^\infty
w_{R_k(B)}r(R_k(B))^n= \sum\limits_{k=k_0(B)}^\infty
\left(\alpha(2^{-k})\right)^n \lambda(k)\leq
\sum\limits_{k=k_0(B)}^\infty
\left(\alpha(2^{-k})\right)^n \frac{\lambda(k)^n}{\lambda(0)^{n-1}}
\\&=\frac{1}{\lambda(0)^{n-1}}\sum\limits_{
k=k_0(B)}^\infty2^{-nk}\leq \frac{2}{\lambda(0)^{n-1}}\cdot 2^{-nk_0(B)}\leq
 \frac{2}{\lambda(0)^{n-1}}r(B)^n.
\end{align*}

Finally, we define our weighted collection of balls by setting $\mathcal
F=\bigcup\limits_{B\in\mathcal B}\Bigl(\mathcal{S}_B\cup\mathcal
R_B\Bigr)$. Again, some of the balls in the united families may coincide;
however, we treat them as "different" balls. Distinguishing them is,
again, not difficult.

Let us now estimate the weighted sum of the $n$th powers of the radii of the
balls in $\mathcal F$. Let 
$N(Q)=Q\cup\bigcup\limits_{\tilde{Q}\backsim
Q}\tilde{Q}$ be the union of all neighbors of a cube $Q\in\mathcal
W$. For neighboring cubes $Q$ and $Q',$ we obtain, via the H\"older and
Poincar\'{e} inequalities, that
\begin{align*}
\abs{f_Q-f_ {Q'}}&\leq \meanint_{Q} \abs{f-f_{N(Q)}}+\meanint_{Q'}
\abs{f-f_{N(Q)}}\leq C \meanint_{N(Q)} \abs{f-f_{N(Q)}}\leq C
\left(\,\meanint_{N(Q)}
\abs{f-f_{N(Q)}}^n\right)^{1/n}\\&\leq C \left(\,\int_{N(Q)}
\abs{Df}^n\right)^{1/n}.
\end{align*}
Hence, we have the
 estimate
$$
r_Q^n=\max\{|f_Q-f_{\tilde{Q}}|^n\colon Q\backsim\tilde{Q}\}\leq
C\int_{N(Q)}|Df|^n
$$
for each $Q\in\mathcal W$ and some constant $C>0$. Next, using the fact
that the inequality $\sum\limits_{Q\in\mathcal
W}\chi_{N(Q)}(y)\leq C$ holds for every $y\in\bR^n$, we estimate
\begin{align}\label{area}
\notag\sum\limits_{B\in\mathcal
F} w_Br(B)^n&\leq\,C(\lambda(0))\sum\limits_{B\in\mathcal
B}r(B)^n=C(\lambda(0))\sum\limits_{Q\in\mathcal
W}r_Q^n\leq
C(\lambda(0))\sum\limits_{Q\in\mathcal W}\int_{N(Q)}|Df|^n\\
&\leq
\,C_1\int_{\bigcup\limits_{Q\in\mathcal W}N(Q)}|Df|^n \leq C_1\int_{\bB^n}|Df|^n
< \infty,
\end{align}
where $C_1>0$ is some constant depending on $n,$ $m$ and $\lambda(0)$ only.

We may assume
that  there is at least one $Q\in \mathcal W$ with $r_Q>0;$
otherwise $f(\partial\bB^n)$ is a singleton. 
Let $\omega\in\partial\bB^n$. We  consider the radius
$[0,\omega]$ and the sequence $(Q_j(\omega))_{j=1}^{\infty}$. 
We fix a large
integer $l_0=l_0(\omega,f)\in\bN$ so that there are elements of the
sequence
$(f_{Q_j(\omega)})_{j=1}^{\infty}$ outside  $B(f(\omega),2^{-l_0+1})$,
if $(f_{Q_j(\omega)})_{j=1}^{\infty}$ contains at least one element different
from $f(\omega)$. If such an integer does not exist, there necessarily is
some $Q=Q_w\in \mathcal W$ with $f_Q=f(\omega)$ and $r_Q>0.$ 
In this case, 
we choose 
$l_0=l_0(\omega,f)\in\bN$ so that $2^{-l_0}<r_{Q_\omega}$. In both cases we also
require  that $2^{-l_0+1}<t_0.$  This allows us to use the properties
\eqref{logom1} and \eqref{logom2}.


For the purposes of our "porosity argument", we would like to make the number
$l_0$ independent of the point~$\omega$. This is done by considering
the
decomposition
$$
\partial\bB^n=\bigcup\limits_{l\in\bN}E_l,\text{ where
}E_l=\{\omega\in\partial\bB^n \colon
l_0(\omega,f)\leq l\}.
$$
Setting $F_l=f(E_l)$, we then have
$f(\partial\bB^n)=\bigcup\limits_{i\in\bN}F_l$.

Let us fix  $l_0\in\bN$. Our aim is to prove that $\mathcal
H^n_{\infty}(F_{l_0})=0$. 

Fix $x\in F_{l_0}.$ Take any $\omega\in E_{l_0},$ such that $x=f(\omega),$
and define the sequence of
concentric annuli $A_l(x)=B(x,2^{-l+1})\setminus B(x,2^{-l})$ with
$l=l_0,l_0+1,\ldots$. Next, we assign a suitable set $P_l(x)$ of cubes from
$\mathcal W$ to each annulus $A_{l}(x)$, $l=l_0,l_0+1,\ldots$. If
$f_{Q_j(\omega)}=x$ for all $j\in \N$, we put $P_l(x)=\{Q_\omega\}$ for
each $l\geq l_0$, where $Q_\omega$ is the cube defined earlier. Otherwise, all
the sets $P_l(x)$ with $l\geq l_0$ consist of elements from
$(Q_j(\omega))_{j=1}^{\infty}$: if an annulus $A_{l}(x)$ with some $l\geq l_0$,
contains no centres from $(f_{Q_j(\omega)})_{j=1}^{\infty}$, we define
$P_l(x)=\{Q_m(\omega)\}$, where an integer $m\in \N$ is chosen so that
$f_{Q_{m-1}(\omega)}\not\in B(x,2^{-l+1})$, but $f_{Q_m(\omega)}\in
B(x,2^{-l})$; if, in contrast, there is at least one centre $f_{Q_j(\omega)}$ in
$A_{l}(x)$, we take $P_l(x)=\{Q_k(\omega)\colon k=m_1,\ldots,m_2\}$,
where $m_1,m_2\in \N$ are such that $f_{Q_{m_1-1}(\omega)}\not\in
B(x,2^{-l+1})$, $f_{Q_{m_2+1}(\omega)}\in B(x,2^{-l})$ and $f_{Q_k(\omega)}\in
A_{l}(x)$ for all $k=m_1,\ldots,m_2$. Moreover, it is possible to choose the
sets $P_l(x)$ above so that the inequality $k_1\leq k_2$ is valid,
whenever $Q_{k_1}(\omega)\in P_{l_1}(x)$, $Q_{k_2}(\omega)\in P_{l_2}(x)$ and
$l_1<l_2$.

Denoting
$$
\theta_l(x)=
\begin{cases}
1, &\text{ if }\sharp P_l(x)\leq \tilde{c}_0\lambda(l),\\
0, &\text{ otherwise,}
\end{cases}
$$
for $l\geq l_0$ and a constant $\tilde{c}_0>\lambda^{-1}(0)$, which we will
specify later,
we would like to prove that there exists an integer $l_1\geq 2l_0$, such that
\begin{equation}\label{kolmaskaava}
\sum\limits_{k=l_0}^{l}\theta_k(x)\geq\frac{l}{2}
\end{equation}
for each $l\geq l_1$. In other words, at least half of the annuli do not contain
too many centres from $(f_{Q_j(\omega)})_{j= 1}^{\infty}$. There is nothing to
prove, if $f_{Q_j(\omega)}=x$ for all $j\in \N$; otherwise, the proof is
by contradiction.

Let us assume that~\eqref{kolmaskaava} does not hold for some $l\geq2l_0$. Take
the smallest number $J\in \N$ such that $f_{Q_j(\omega)}\in
B(x,2^{-l})$ for all $j>J$ and let
$\omega^\prime\in[0,\omega]$ be the point of
$Q_{J}(\omega)\cap[0,\omega],$ which is the closest to
$\omega$. Now, the assumption on the continuity of $f$
and the properties of our Whitney decomposition imply
\begin{align*}
2^{-l}\leq|f_{Q_{J}(\omega)}-x|=|f_{Q_{J}(\omega)}
-f(\omega)|\leq \meanint_{Q_{J}}\abs{f(y)-f(\omega)}dy\leq \psi(2
(1-\abs{\omega^{\prime}})).
\end{align*}
That is,
$$
\frac{u(2^{-l})}{2}\leq 1-\abs{\omega^{\prime}}.
$$
Next, we connect this estimate to the number of Whitney cubes that precede
$Q_{J}$ in $(Q_j(\omega))_{i=1}^{\infty}$. 

Using~\eqref{comparable}, we observe that
$$
\log \frac{2}{u(2^{-l})}\geq \log
\frac{1}{1-\abs{\omega^\prime}}\geq \frac{1}{c_2} \sharp q(0,\omega^\prime).
$$

In the calculation above, we may have to adjust the choice of $l_0$ to ensure
$\sharp q(0,\omega^\prime)>c_3$ (see \eqref{comparable}). Finally, we obtain a
lower bound for $\sharp q(0,\omega^\prime)$,
using the assumption that we have at least $\lfloor l/2\rfloor-l_0+2$ annuli
$A_k(x)$ with $\theta_k(x)=0$. We notice that the sets $P_k(x)$ with
$\theta_k(x)=0$ contain different cubes for different $k$'s, and, if $k\leq
l,$
then the cubes in $P_k(x)$ precede $Q_{J}(\omega)$ in
$(Q_j(\omega))_{j=1}^{\infty}$. We have

\begin{align*}
c_2 \log\frac{2}{u(2^{-l})}&\geq \sharp
q(0,\omega^\prime)\geq\sum_{\substack{k=l_0,\ldots,l\\\theta_k(x)=0}}\sharp
P_k(x)\geq
\sum\limits_{k=l_0}^{\lfloor l/2\rfloor+1}\tilde{c}_0\lambda(k)
\geq \tilde{c}_0\sum\limits_{k=l_0}^{\lfloor
l/2\rfloor+1}\frac{2^{-k}u'(2^{-k})}{u(2^{-k})}\\&\geq\tilde{c}_0 \left(\log
\frac{1}{u(2^{-l/2})}-\log\frac{1}{u(2^{-l_0})}\right)\geq
\tilde{c}_0\beta^{-1} \log
\frac{1}{u(2^{-l})}-\tilde{c}_0\log\frac{1}{u(2^{-l_0})}.
\end{align*}
Choosing $\tilde{c}_0>c_2\beta$, this cannot hold when $l$ is large enough.
Thus, there is a number $l_1=l_1(\tilde{c}_0,l_0,u)$, such
that~\eqref{kolmaskaava} holds for all $l\geq l_1$.

Our next step is to prove that if $\theta_k(x)=1$ for some $k$ and
$P_k(x)=\{Q_1,\ldots,Q_m\}$, then it is
possible to find a collection of balls $\{B_1,\ldots,B_{m'}\}$ from the
families $\mathcal{S}_{B(f_{Q_i},r_{Q_i})}$ or
$\mathcal R_{B(f_{Q_i},r_{Q_i})}$, having radii at least
$const\cdot \alpha(2^{-k})$ and satisfying $\sum_{i=1}^{m'}w_{B_i}\geq
const\cdot \lambda(k)$. Moreover, we
choose different balls (in the sense mentioned above) for different~$k$'s.

Let us fix $k\geq l_0$ such that $\theta_k(x)=1$. Suppose first that
the annulus $A_{k}(x)$ contains no centres from $(f_{Q_j(\omega)})_{j=
1}^{\infty}$. Then the set $P_k(x)$ consists of a single cube $Q\in\mathcal
W$ with $f_Q\in B(x,2^{-k})$. The definitions of $r_Q$
and $l_0$
imply $r_Q>2^{-k}$, and hence $k\geq k_0(B(f_Q,\,r_Q))$. Thus, we may choose the
ball $R_k(B(f_Q,r_Q))$, which, by
definition,
has radius $\alpha(2^{-k})$ and weight $\lambda(k)$. In addition, the centre of
this ball lies in $B(x,2^{-k})$.

Assume now that the annulus $A_{k}(x)$ contains at least one of the centres from
$(f_{Q_j(\omega)})_{j= 1}^{\infty}.$ Then, we have by the definitions of
$P_k(x)$ and $r_{Q}$ that
$$
\sum_{Q\in P_k(x)}2r_Q \geq 2^{-k}.
$$
Since $\sharp P_k(x)\leq \tilde{c}_0\lambda(k),$ we observe that
$$
\sum_{\substack{Q\in P_k(x)\\ 2r_Q\geq \alpha(2^{-k})/2\tilde{c}_0}} 2r_Q \geq
\frac{2^{-k}}{2}.
$$
For each $Q\in P_k(x)$ with $2r_Q \geq \frac{\alpha(2^{-k})}{2\tilde{c}_0 },$
we
choose a number $n_Q\in \N$ so that
$$
2^{n_Q-1}\frac{\alpha(2^{-k})}{2\tilde{c}_0}\leq2r_Q<2^{n_Q}\frac{\alpha(2^{-k}
) } { 2\tilde { c } _0 }
$$
and pick a ball $\tilde{B}=S_{n_Q}(B(f_Q,r_Q))=B(f_Q,r_Q/2^{n_Q})\in \mathcal
S_{B(f_Q,r_Q)}$. By the definition of $S_i(B)$, we have
$w_{\tilde{B}}=2^{n_Q}$ and
$$
r(\tilde{B})=\frac{r_Q}{2^{n_Q}}\geq\frac{\alpha(2^{-k})}{8\tilde{c}_0}.
$$
For the sum of the weights $\sum_{Q}2^{n_Q}$ of all the balls obtained in such a
manner, we
observe that
$$
\frac{\alpha(2^{-k})}{2\tilde{c}_0}\sum_{\substack{Q\in
P_k(x)\\2r_Q\geq\alpha(2^{-k})
/2\tilde{c}_0}} 2^{n_Q}>
\sum_{\substack{Q\in P_k(x)\\2r_Q\geq\alpha(2^{-k})
/2\tilde{c}_0}}
2r_Q\geq \frac{2^{-k}}{2}.
$$
Hence we have a collection of balls $\{B_1,\ldots,B_m\}\subset \mathcal
F$ with weights sum $\sum_{i=1}^mw_{B_i}>\tilde{c}_0\lambda(k)$ and of radii
at least $\alpha(2^{-k})/8\tilde{c}_0$. Moreover, all these balls
have their centres in the annulus $A_k(x)$, and hence in the ball
$B(x,2^{-k+1}).$

We have proved that there exists a number $l_1=l_1(l_0,\tilde{c}_0)$, such that
for
each $\omega\in E_{l_0}$ and $l\geq l_1$, among the numbers $l_0,\ldots,l$,
there are at least $\lceil l/2\rceil$ integers $k\in\{l_0,\ldots,l\}$, such that
we are able to find a finite collection of balls $\{B_i\}_{i\in
I}\subset\mathcal F$ with weights sum $\sum_{i\in I}w_{B_i}$ at least
$\lambda(k)$ and of radii at least $\alpha(2^{-k})/8\tilde{c}_0$, so that the
centres of the
balls $B_i$, $i\in I$, lie in the ball $B(x,2^{-k+1})$. Here,
$\tilde{c}_0$ is a positive constant depending only on $\beta,$  $n$ and
$\lambda(0)$, and the
balls are different for a fixed $\omega$ and
different $k$'s.

Fix $l\geq l_1$. We modify our family $\mathcal F$ according to $l$.
If $B\in\mathcal F$ and there is $k\in\{l_0+1,\ldots,l\}$ such that
$\alpha(2^{-k})/8\tilde{c}_0\leq r(B)<\alpha(2^{-k+1})/8\tilde{c}_0$, we replace
$B$ with
the ball $\tilde{B}=\frac{\lambda(k)}{\lambda(l)}B$, and set
$w_{\tilde{B}}=(\lambda(l)/\lambda(k))^nw_B$. The radius of $\tilde{B}$
satisfies
$r(\tilde{B})\geq\frac{\lambda(k)}{\lambda(l)}
\alpha(2^{-k})/8\tilde{c}_0 = 2^{-k}/8\tilde{c}_0\lambda(l)$ and the equality
$w_{\tilde{B}}r(\tilde{B})^n=w_Br(B)^n$ holds.
Similarly, we replace a ball $B$ with $r(B)\geq\alpha(2^{-l_0})/8\tilde{c}_0$
with the
ball $\tilde{B}=\frac{\lambda(l_0)}{\lambda(l)}B$ and set
$w_{\tilde{B}}=(\lambda(l)/\lambda(l_0))^nw_B$.
Again, we have $r(\tilde{B})\geq2^{-l_0}/8\tilde{c}_0\lambda(l)$
and $w_{\tilde{B}}r(\tilde{B})^n=w_Br(B)^n$.
Finally, $\mathcal F_l$ is the collection  of balls obtained in this
manner from the balls in $\mathcal F$.  For this family of balls, we
notice (see~\eqref{area}) that
\begin{equation}
\label{taasarea}
\sum\limits_{B\in\mathcal
F_l}w_Br(B)^n\leq\sum\limits_{B\in\mathcal
F}w_Br(B)^n<\infty.
\end{equation}

If $\omega\in E_{l_0}$, $x=f(\omega)$ and $k\in\{{l_0},\ldots,l\}$ is
such that $\theta_k(x)=1$, then there is a collection $\{B_i\}_{i\in
I}\subset\mathcal F$ with the properties mentioned above. If a
ball $B_i$ with some $i\in I$ is replaced by a ball
$\tilde{B_i}=\frac{\lambda(k_i)}{\lambda(l)}B_i$, while creating $\mathcal
F_l$, we
necessarily have $k_i\leq k$. Therefore, the inequalities
$$
\sum\limits_{i\in I}w_{\tilde{B}_i}=\sum\limits_{i\in
I}\left(\frac{\lambda(l)}{\lambda(k_i)}\right)^nw_{B_i}\geq\left(\frac{
\lambda(l)}{\lambda(k) } \right)^n\sum\limits_ {
i\in I}w_{B_i} \geq\left(\frac{\lambda(l)}{\lambda(k) }
\right)^n\lambda(k)=\lambda(l)^n \frac{1}{\lambda(k)^{n-1}}
$$
and
$r(\tilde{B}_i)\geq 2^{-k_i}/8\tilde{c}_0\lambda(l)\geq2^{-k}/8\tilde{c}
_0\lambda(l)\, $ hold (by \eqref{logom1}, $\lambda$ is increasing).
Since, for each $i\in I$, the centre of a ball $\tilde{B}_i$ is contained in
$B(x,2^{-k+1})$, we have the inclusion
$x\in16\tilde{c}_0\lambda(l)\tilde{B}_i$. Hence we
observe that
$$
\sum\limits_{B\in\mathcal
F_l}w_B\chi_{16\tilde{c}_0\lambda(l)B}(y)\geq\sum_{\substack{
k=l_0 ,\ldots,
l\\\theta_k(y)=1
}} \lambda(l)^n
\frac{1}{\lambda(k)^{n-1}}\geq\frac{\lambda(l)^n}{4}\sum\limits_
{ k=l_1}^l\frac{1}{\lambda(k)^{n-1}}
\geq\frac{\lambda(l)^n}{4} G_l
$$
for each $y\in F_{l_0},$ where
$G_l=\sum\limits_{
k=l_1}^l\frac{1}{\lambda(k)^{n-1}}.$  That is,
$\left(\frac{4w_B}{\lambda(l)^n G_l},
16\tilde{c}_0\lambda(l)B\right)_{B\in \mathcal F_l}$ is a
weighted cover of
the set $F_{l_0}.$
We observe also that diameters of all balls in this cover are at least
$2^{-l}$. This information will be used in the proof of Theorem \ref{thm1}
below.

Finally, using the weighted cover obtained above and \eqref{taasarea}, we
estimate the 
weighted Hausdorff $n$-content $\lambda_{\infty}^n(F_{l_0})$:
\begin{align*}
\lambda_{\infty}^n(F_{l_0})&\leq \frac{4}{\lambda(l)^n G_l} \sum_{B\in
F_l} w_B(\diam 16\tilde{c}_0\lambda(l)B)^n\leq
\frac{4^{2n+1} \tilde{c}_0^n}{G_l} \sum_{B\in
F_l} w_B(\diam B)^n\\&\leq\frac{2^{5n+2}\tilde{c}_0^n}{G_l}\sum_{B\in
F_l} w_Br(B)^n\leq \frac{A}{G_l},
\end{align*}
where the constant $A$ depends on $\beta,$ $n,$ $m,$
$\norm{f}_{W^{1,n}(\B^n,\R^m)}$
and
$\lambda(0)$ but does not depend on $l_0$ or $l$.

Now,
Lemma~\ref{Howroyd} implies
$\Hau_{\infty}^n(F_{l_0})\leq \frac{CA}{G_l}.$ 
 Here $C$ depends only on the dimension $n.$
Now, we are done as soon as we can show that $G_l \rightarrow \infty$  as
$l\rightarrow \infty$. We have
$$
G_l=\sum_{k=l_1}^{l}\frac{1}{\lambda(k)^{n-1}}=
\sum_{k=l_1}^{l}\frac{u(2^{-k})^{n-1}}{2^{-k(n-1)}u'(2^{-k})^{n-1}}\geq
\int_{2^{-l}}^{2^{-l_1}}\left(\frac{u(t)}{u'(t)}\right)^{n-1}\frac{dt}{t^n}
$$
and the right hand side diverges as $l\rightarrow \infty$ by the assumptions on
the modulus of continuity.

\end{proof}

The proof of Theorem \ref{thm1} is similar to the proof of Theorem \ref{thm2}.
We only point out the required changes.

\begin{proof}[Proof of Theorem \ref{thm1}]
 Let $f$ be as in statement of the theorem. Our notation will be the same as in
previous proof. That is, $\alpha(t)=\gamma t$ and $\lambda(k)=\frac{1}{\gamma}.$

Fix a small $\varepsilon>0.$ Then there exists a $\delta>0$ such that
\begin{equation}
 \label{areaepsilon}
\int_{\bB^n\setminus B(0,1-\delta)} \abs{Df}^n\leq \varepsilon.
\end{equation}
Let $\mathcal W^{\,\delta}$ be the set of the cubes in $\mathcal W$ which are
contained in $\bB^n\setminus B(0,1-\delta)$ and whose all neighbour cubes are
also contained in $\bB^n\setminus B(0,1-\delta).$ We define our collection of
balls to be $\mathcal B^{\,\delta}=\{B(f_Q,r_Q)\colon Q\in\mathcal
W^{\,\delta} \}.$ Then, proceeding as in the previous proof, we define
$\mathcal F^{\,\delta}$ analogously to $\mathcal F$ and obtain the estimate (see
\eqref{area})
\begin{equation}
 \label{kokomassa}
\sum_{B\in \mathcal F^{\,\delta}}w_B r(B)^n\leq C_1 \varepsilon.
\end{equation}
Let $\omega \in \partial \bB^n.$ We define the number $l_0=l_0(\omega,f,\delta)$
as in previous proof, but instead of all cubes in
$(Q_j(\omega))_{j=1}^{\infty}$ we consider only those which are contained in
$\mathcal W^{\,\delta}.$
Again, we split $\partial \bB^n$ to sets $E_{l}=\{\omega\in \partial
\bB^n\colon l_0(\omega)\leq l\}$ and consider a fixed $f(E_{l'}).$
With the same method as earlier we find, for big $l$ a collection of balls
$\mathcal F
_{l}^{\,\delta}$ with weights such that
$(\frac{8w_B\gamma}{l-l_1},\frac{16\tilde{c}_0}{\gamma}B)_{B\in\mathcal
F_l^{\,\delta} }$ is a weighted cover of the set $f(E_{l'}),$ the radius
of the balls $\frac{16\tilde{c}_0}{\gamma}B$ is at least $2^{-l}$ and 
$$
\sum_{B\in \mathcal F_l^{\,\delta}} w_B r(B)^n\leq C_1 \varepsilon.
$$

We may assume that our $\varepsilon>0$ is so small that all balls in our
weighted cover have radius smaller than $\frac{1}{2}.$ With this weighted cover
we obtain 

\begin{align*}
\lambda^{g}_{\infty}(f(E_{l'}))&\leq \frac{4\gamma}{l-l_1}\sum_{B\in
\mathcal
F_l^{\,\delta}}w_B(\diam \frac{16\tilde{c}_0}{\gamma}B)^n\log \frac{1}{\diam
\frac{16\tilde{c}_0}{\gamma}B}\\&\leq \frac{4\gamma}{l-l_1}\sum_{B\in \mathcal
F_l^{\,\delta}}w_B(\diam \frac{16\tilde{c}_0}{\gamma}B)^n\log 2^l\leq
\frac{2^{2+5n}\tilde{c}_0^{n}}{\gamma^{n-1}}\frac{l}{l-l_1}\sum_{B\in \mathcal
F_l^{\,\delta}}w_Br(B)^n\leq\frac{2^{3+5n}\tilde{c}_0^{n}C_1}{\gamma^{n-1}}
\varepsilon.
\end{align*}
Here we assumed  $l$ to be so big that $\frac{l}{l-l_1}\leq 2.$
Lemma \ref{Howroyd} implies $\Hau^{g}_{\infty}(f(E_{l'}))\leq A
\varepsilon.$ Here $A$ depends on $\gamma,$  $n$ and $m$ but not on $l'$ or
$l$; 
therefore, we have $\Hau^{g}_{\infty}(f(\partial \bB^n))\leq A
\varepsilon,$ see \cite[Corollary 8.2]{HowroydPhD} or
\cite[2.10.22]{FedererGMT}. Letting $\varepsilon$ tend to zero gives
$\Hau^{g}_{\infty}(f(\partial \bB^n))=0,$ which implies
$\Hau^{g}(f(\partial \bB^n))=0.$
\end{proof}

\section{Example}
\label{esim}
We use the  notation $\norm{x}=\max\{\abs{x_1},\abs{x_2}\}.$ Let
$p>1/2.$
We will construct a mapping $f:\R^2\rightarrow \R^2$,
 with  $f\in W^{1,2}_{loc}(\R^2,\R^2),$ which is locally H\"older continuous
 and maps $\partial \bB^2$ to a set of positive
$\Hau^g$-measure, with $g(t)=t^2\left(\log\frac{1}{t}\right)^{2p}$.

The mapping is a composition of two locally H\"older continuous mappings.
The second mapping is defined in \cite[Prop. 5.1]{HerKo03}. It is a
homeomorphism
$h:\R^2\rightarrow \R^2,$  identity outside $[0,1]^2,$ and maps a
small Cantor set $\mathcal C\subset [0,1]^2$ to a
large Cantor set $\mathcal C' \subset [0,1]^2$ with positive $\Hau^g$-measure.
It was checked in
\cite{KZZ09} that this mapping belongs to $W^{1,2}_{loc}(\R^2,\R^2)$ if $p>1/2.$

Next, we elaborate on the construction of $h$ and prove that it is H\"older
continuous in $ [0,1]^2$. 
Let $\sigma<1/2.$
We use the notation $2r_k=\sigma^{k}$ and $2R_k=\frac12\sigma^{k-1}$
for $k\in\N.$ The set $\mathcal C$ is defined as follows:
In the first generation we have one square $Q_0=[0,1]^2$ with side length
$2r_0.$
We split this square to four squares $P_{1i},$ $i=1,2,3,4,$ of side length
$2R_1.$ We define the square $Q_{1i}$ to be the square of side length $2r_1$
centered at the center of $P_{1i}.$ Then $P_{1i}$ and $Q_{1i}$ generate
the frame $A_{1i}=P_{1i}\setminus
Q_{1i}.$ Next, we divide all squares $Q_{1i}$ to squares $P_{2j}$,
$j=1,\ldots,4^2.$ Then we define $Q_{2j}$ and $A_{2j}$ as in the first step.
We proceed inductively.
Thus, we have obtained for all $k\in\N$ sets $Q_{ki},$ $P_{ki}$ and $A_{ki},$
where $i=1,\ldots,2^{2k}$ and we have  $\mathcal C =\cap_{k}\cup_i Q_{ki}.$

The set $\mathcal C',$ and sets $Q'_{ki},$ $P'_{ki}$ and $A'_{ki},$ with
$k\in\N$ and $i=1,\ldots,2^{2k}$ are defined in the same way, using
$2r'_1=\frac12(\log 4)^{-p}$, $2R'_2=r'_1$ and $2r'_k=(\log4)^{-p}2^{-k}k^{-p}$
and
$2R'_k=(\log4)^{-p}2^{-k}(k-1)^{-p}$ for other $k\in \N.$

The mapping $h$ is defined so that it maps the frame $A_{ki}$ to the frame
$A'_{ki}$ via a ``radial'' stretching and is continuous in $[0,1]^2$. The
radial
stretching  which maps
$A=\{x:r_k\leq\norm{x}\leq R_k\}$ to $A'=\{x:r'_k\leq\norm{x}\leq R'_k\}$ is
$$
\rho(x)=(a\norm{x}+b)\frac{x}{\norm{x}}, \text{ where }
a=\frac{R'_k-r'_k}{R_k-r_k} \text{ and  } b=\frac{R_kr'_k-R'_kr_k}{R_k-r_k}.
$$

If $x,y\in A,$ then $\norm{x-y}\leq 2R_k=\frac12 \sigma^{k-1}$ and
$$
a\leq\frac{4\sigma}{1-2\sigma}(2\sigma)^{-k}\leq
C(\sigma)\sigma^{-(1-\beta)k}\leq C(\sigma)\norm{x-y}^{\,\beta-1},
$$ 
where $\beta=\log 2/\log \frac{1}{\sigma}.$ Similarly,
$$
\frac{\abs{b}}{\abs{r_k}}\leq \frac{4}{1-2\sigma}(2\sigma)^{-k}\leq
C(\sigma)\norm{x-y}^{\,\beta-1}.
$$
The mapping $\rho$ is H\"older continuous with exponent $\beta,$
$$
\norm{\rho(x)-\rho(y)}\leq C a
\norm{x-y}+2\frac{\abs{b}}{\abs{r_k}}\norm{x-y}\leq C(\sigma)\norm{x-y}^{\beta}.
$$

If $x\in A_{ki}$ and $y\in Q_{k+1,j}\subset P_{ki},$ then
$\norm{x-y}\geq R_{k+1}-r_{k+1}=C(\sigma)\sigma^{k}$ and
$\norm{h(x)-h(y)}\leq 2R'_k\leq 2^{-k}.$
These imply 
$$
\frac{\norm{h(x)-h(y)}}{\norm{x-y}^{\beta}}\leq C(\sigma).
$$
The $\beta$-H\"older continuity of $h$ easily follows from the continuity
estimates obtained above.

The first mapping $g:\R^2\rightarrow \R^2$ is a (locally H\"older continuous)
quasiconformal mapping for
which $\mathcal{C}\subset g(\partial \bB^2).$ 
Such a mapping was constructed in \cite{GeVa73}. 

Finally, the composition $h\circ g:\R^2\rightarrow \R^2$ is a homeomorpism with
$h\circ g(\partial\bB^2)\supset\mathcal C'.$ Moreover, it is locally H\"older
continuous and $h\circ g \in W^{1,2}_{loc}(\R^2, \R^2)$ by quasiconformality of
$g$ and the change of variable formula.

\bibliography{../bibtex.bib}{}

\begin{thebibliography}{10}

\bibitem{FedererGMT}
{\sc Federer, H.}
\newblock {\em {Geometric measure theory}}.
\newblock {Die Grundlehren der mathematischen Wissenschaften, Band 153}.
  Springer-Verlag New York Inc., New York, 1969.

\bibitem{GeVa73}
{\sc Gehring, F.~W., and V{\"a}is{\"a}l{\"a}, J.}
\newblock {Hausdorff dimension and quasiconformal mappings}.
\newblock {\em J. London Math. Soc. (2) 6\/} (1973), 504--512.

\bibitem{HeKoNi11}
{\sc Hencl, S., Koskela, P., and Nieminen, T.}
\newblock {Dimension gap under conformal mappings}.
\newblock {\em Adv. Math. 230}, 3 (2012), 1423--1441.

\bibitem{HerKo03}
{\sc Herron, D.~A., and Koskela, P.}
\newblock {Mappings of finite distortion: gauge dimension of generalized
  quasicircles}.
\newblock {\em Illinois J. Math. 47}, 4 (2003), 1243--1259.

\bibitem{HowroydPhD}
{\sc Howroyd, J.~D.}
\newblock {\em {On the theory of Hausdorff measures in metric spaces}}.
\newblock PhD thesis, University College London, 1994.

\bibitem{JoMa95}
{\sc Jones, P.~W., and Makarov, N.~G.}
\newblock {Density properties of harmonic measure}.
\newblock {\em Ann. of Math. (2) 142}, 3 (1995), 427--455.

\bibitem{KoRo97}
{\sc Koskela, P., and Rohde, S.}
\newblock {Hausdorff dimension and mean porosity}.
\newblock {\em Math. Ann. 309}, 4 (1997), 593--609.

\bibitem{KoZa12}
{\sc Koskela, P., and Zapadinskaya, A.}
\newblock {Dimension gap under Sobolev mappings}.
\newblock {\em In preparation\/}.

\bibitem{KZZ09}
{\sc Koskela, P., Zapadinskaya, A., and Z{\"u}rcher, T.}
\newblock {Generalized dimension distortion under planar {S}obolev
  homeomorphisms}.
\newblock {\em Proc. Amer. Math. Soc. 137}, 11 (2009), 3815--3821.

\bibitem{MM95}
{\sc Mal{\'y}, J., and Martio, O.}
\newblock {Lusin's condition ({N}) and mappings of the class {$W^{1,n}$}}.
\newblock {\em J. Reine Angew. Math. 458\/} (1995), 19--36.

\bibitem{Ni06}
{\sc Nieminen, T.}
\newblock {Generalized mean porosity and dimension}.
\newblock {\em Ann. Acad. Sci. Fenn. Math. 31}, 1 (2006), 143--172.

\bibitem{SteinSingInt}
{\sc Stein, E.~M.}
\newblock {\em {Singular integrals and differentiability properties of
  functions}}.
\newblock {Princeton Mathematical Series, No. 30}. Princeton University Press,
  Princeton, N.J., 1970.

\end{thebibliography}
\bibliographystyle{acm}
\vspace{1 pc}
\noindent  Aapo Kauranen \& Pekka Koskela\\
Department of Mathematics and Statistics\\
University of Jyv\"{a}skyl\"{a}\\
P.O. Box 35 (MaD)\\
FI-40014\\
Finland\\

\noindent{\it E-mail addresses}: \texttt{aapo.p.kauranen@jyu.fi}\\
\texttt{pkoskela@maths.jyu.fi}
\end{document}